\documentclass[11pt]{article}
\usepackage{amsmath,amssymb,amsthm,mathrsfs}
\usepackage{enumitem}
\usepackage{geometry}
\usepackage{graphicx}

\numberwithin{equation}{section}

\newtheorem{theorem}{Theorem}[section]
\newtheorem{lemma}[theorem]{Lemma}

\newtheorem{remark}[theorem]{Remark}

\newcommand{\PP}{\mathbb{P}}

\newcommand{\BE}{\mathbb{E}}
\newcommand{\D}{\mathrm{d}}

\newcommand{\vertk}{\stackrel{{\cal D}}{\longrightarrow}}

\title{The limit law of the largest interpoint distance in a $d$-dimensional ellipsoid}
\author{Norbert Henze\footnote{Institute of Stochastics, Karlsruhe Institute of Technology (KIT), 76131 Karlsruhe, Germany. e-mail:
henze@kit.edu} \ and Sreenivasa Rao Jammalamadaka\footnote{Department of Statistics and Applied Probability, University of California, Santa Barbara, CA, 93106, USA. e-mail: rao@pstat.ucsb.edu}}

\date{}

\begin{document}
\maketitle

\begin{abstract}
We consider the largest interpoint distance
$M_n=\max_{1\le i<j\le n}\|X_i-X_j\|$ among independent random points $X_1,\ldots,X_n$, uniformly distributed on a $d$-dimensional ellipsoid. We assume that the largest semi-axis has length 1 and multiplicity $k\ge 2$, whereas the remaining semi-axes are strictly
smaller. In this situation, the diameter is attained on a manifold of
dimension $k-1$, and the extremal points are no longer isolated.
We establish a weak limit law for the diameter deficit $2-M_n$. Writing $q=d-k$ and
$\alpha=q+(k+3)/2$, we show that $n^{2/\alpha}(2-M_n)$ converges in distribution to a Weibull random variable. The proof is based
on a local analysis near the diameter manifold, a sharp asymptotic formula
for the two-point tail probability, and a Chen--Stein Poisson
approximation for rare nearly diametral pairs.
\end{abstract}

\textbf{Keywords and phrases:} 
largest interpoint distance,
diameter of a random point set, ellipsoid,
extreme-value theory for random sets,
Weibull limit law,
Poisson approximation,
Chen--Stein method.
\medskip

\textbf{AMS 2020 subject classifications:}
Primary 60D05;
Secondary 60F05, 60G70.

\section{Introduction and statement of the main result}\label{secintro}

Let
\[
M_n
=
\max_{1 \le i < j \le n} \|X_i - X_j\|
\]
denote the largest Euclidean interpoint distance among independent and identically distributed random points $X_1,\ldots,X_n$ in $\mathbb{R}^d$, where $d \ge 2$. The asymptotic behaviour of $M_n$ has attracted considerable attention in probability theory, stochastic geometry and multivariate extreme-value theory. If the common distribution of the points has compact support $K$ then $M_n \to \operatorname{diam}(K)$ almost surely, where $\operatorname{diam}(K) = \sup \bigl\{ \|x-y\| : x,y \in K \bigr\}$. A natural problem is therefore to determine normalizing constants
$a_n \to \infty$ such that $a_n \bigl( \operatorname{diam}(K) - M_n \bigr)$ converges in distribution to a nondegenerate limit law.

Weak limit laws for the largest interpoint distance have been obtained in a variety of settings. Matthews and Rukhin \cite{mr93} derived the asymptotic distribution of the sample range for multivariate Gaussian distributions, and Henze and Klein \cite{hekl96} extended these results to symmetric Kotz-type distributions. More general results were obtained by Jammalamadaka and Janson~\cite{jamsva15} if the underlying distribution is spherically symmetric and has Gumbel-type radial tails.  For heavy-tailed spherically decomposable distributions, Henze and Lao \cite{henzelao} derived non-classical weak limit laws related to Poisson point processes. Further contributions include Appel, Najim and Russo \cite{anr} for compact planar regions, Mayer and Molchanov \cite{maymol07} for the unit ball, as well as related articles by Lao \cite{lao10},
Demichel, Fermin and Soulier \cite{dfs15}, and Heiny and Kleemann \cite{hk25}.

A particularly important line of research concerns distributions supported by ellipsoids and more general smooth convex bodies. In this context, Schrempp \cite{sc16,sc19} established weak limit laws for the largest interpoint distance in several geometric settings. In particular, \cite{sc16} treated ellipsoids with a unique major axis. In that case the diameter is attained only at two antipodal points, and the asymptotic behaviour of $M_n$ is determined by the local geometry near these isolated extremal points.

The present paper is concerned with the fundamentally different situation in
which the largest semi-axis is not unique. Recently, the first author
\cite{he26} investigated the simplest nontrivial example, namely the
rotational ellipsoid in $\mathbb R^3$ whose diameter points form an
equatorial circle. It was shown there that the correct normalization is
$a_n = n^{4/7}$, leading to a Weibull-type limit law. The proof
combined geometric localization arguments with a Chen--Stein Poisson
approximation for rare nearly diametral pairs.

The purpose of the present paper is to extend the results of \cite{he26} to arbitrary dimensions and arbitrary multiplicities of the largest semi-axis. Since the asymptotic behaviour of the largest interpoint distance is invariant under multiplication of the underlying point configuration by a positive constant, we may and shall assume without loss of generality that the largest semi-axis length equals $1$. 

Thus, let
\[
E
=
\left\{
x=(u,z)\in \mathbb{R}^k\times\mathbb{R}^q :
\|u\|^2+\sum_{j=1}^q \frac{z_j^2}{a_j^2}\le 1  \right\},
\]
where $q=d-k$, $1>a_1\ge\cdots\ge a_q>0$, and $k \ge 2$. Moreover, here and in what follows, $\|\cdot\|$ denotes the Euclidean norm, where the dimension of the underlying space will always be clear from the
context.

Hence, the first $k$ semi-axes of $E$ have length $1$, whereas the remaining semi-axes have strictly smaller lengths $a_1,\ldots,a_q$. The diameter of $E$ equals $2$, and it is attained precisely by all antipodal pairs $(e,0)$ and $(-e,0)$, where 
$e\in \mathbb{S}^{k-1}:= \{x \in \mathbb{R}^k: \|x\| = 1 \}$. 

Hence the set of diameter points is the manifold
$\mathbb{S}^{k-1}\times\{0\}$, where $0$ denotes the origin in $\mathbb{R}^q$. In particular, the extremal points are not isolated whenever $k\ge2$.

Let $X_1,X_2,\ldots$ be independent and uniformly distributed random points in $E$. The aim of this paper is to determine the asymptotic behaviour of
$2-M_n$ as $n\to\infty$.

The results below show that this asymptotic behaviour is governed
by the local geometry near the manifold of diameter points. More precisely,
the normalization exponent depends on both the dimension $k-1$ of the
diameter manifold and the number $q=d-k$ of contracting directions.

\medskip

\begin{theorem}\label{thm:main}
Let
\[
\alpha = q+\frac{k+3}{2} = d+\frac32-\frac{k}{2}.
\]

Then
\[
n^{2/\alpha}(2-M_n) \vertk Y,
\]
where $Y$ has distribution function
\[
F_Y(t) = 1-  \exp\!\left( -\frac{K_{\mathcal{A}}}{2}\,t^\alpha \right), \qquad t\ge0,
\]
and $K_{\mathcal{A}}$ is the constant appearing in Lemma~\ref{lemtwop}.
\end{theorem}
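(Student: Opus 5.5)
The proof will go through a Poisson approximation for the number of nearly diametral pairs. Fix $t>0$ and put $s_n=t\,n^{-2/\alpha}$. Define
\[
W_n=\sum_{1\le i<j\le n}\mathbf{1}\{\|X_i-X_j\|>2-s_n\}.
\]
Then $\PP(n^{2/\alpha}(2-M_n)>t)=\PP(W_n=0)$, so it suffices to show that $W_n$ converges in distribution to a Poisson law with mean $\lambda=\tfrac{K_{\mathcal A}}{2}t^\alpha$.

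The first input is the two-point tail asymptotic of Lemma~\ref{lemtwop}, which I take in the form $p(s):=\PP(\|X_1-X_2\|>2-s)\sim K_{\mathcal A}\,s^{\alpha}$ as $s\downarrow0$. Heuristically, $\alpha$ arises as follows. A nearly diametral pair has both points within distance $O(s)$ of the boundary. Their normalized $u$-directions must be $O(\sqrt s)$-close to antipodal, which contributes $(k-1)/2$ to the exponent. Their $z$-coordinates must be $O(\sqrt s)$, contributing $q/2$ for each point. The radial depths are $O(s)$ each, giving $1+1$. Adding up, $2\cdot\tfrac{q}{2}+\tfrac{k-1}{2}+2=\alpha$. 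With this lemma,
\[
\BE W_n=\binom n2 p(s_n)\sim \frac{n^2}{2}K_{\mathcal A}t^\alpha n^{-2}=\lambda .
\]

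Next I apply the Chen--Stein bound for dissimilar sums (Arratia--Goldstein--Gordon) with index set $I=\{\{i,j\}\}$. The dependency neighbourhood of $\{i,j\}$ is taken to be the set of pairs sharing an index with it. Pairs with disjoint indices are independent, so the term $b_3$ vanishes. The term $b_1$ is at most $|I|\cdot 2n\cdot p(s_n)^2=O(n^3 n^{-4})\to0$. The remaining term is
\[
b_2\le n^3\,\PP\bigl(\|X_1-X_2\|>2-s_n,\ \|X_1-X_3\|>2-s_n\bigr),
\]
so I must show that this triple probability is $o(n^{-3})$.

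This estimate is the main obstacle, and I would handle it by conditioning on $X_1$. Given $X_1=x$, the two events are conditionally independent, so the triple probability equals $\BE[g(X_1)^2]$, where $g(x)=\PP(\|x-X_2\|>2-s)$. The local geometry bounds $g$. Writing $x=(u,z)$, the event forces $\|u\|\ge1-O(s)$ and $z=O(\sqrt s)$ in a weighted sense. In that case $X_2$ lies in a cap around $-x/\|x\|$ of $u$-angular width $O(\sqrt s)$, $z$-width $O(\sqrt s)$ and depth $O(s)$, so
\[
g(x)\le C\,s^{(k-1)/2+q/2+1}=C s^{\alpha-1-q/2}.
\]
On the other hand, $\PP(X_1\in\text{region})\le C s^{1+q/2}$, because this region is a shell around the $(k-1)$-dimensional diameter manifold of thickness $s$ in the radial direction and $\sqrt s$ in the $q$ contracting directions. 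Combining,
\[
\BE[g^2]\le C s^{1+q/2}\,s^{2\alpha-2-q}=C s^{2\alpha-1-q/2}.
\]
Since $s_n^{\alpha}\asymp n^{-2}$, this equals $C n^{-4}\, s_n^{-1-q/2}=C n^{-4+(2+q)/\alpha}$. The exponent satisfies $(2+q)/\alpha<1$ because $\alpha=q+(k+3)/2>q+2$ for $k\ge2$. Hence $n^3\BE[g^2]\to0$, so $b_2\to0$.

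The care needed lies in making the cap bound on $g$ uniform, including the points $x$ at depth comparable to $s$ from $\partial E$. The plan is to reuse the local coordinates from the proof of Lemma~\ref{lemtwop} and take uniform versions of its estimates. With $b_1,b_2,b_3\to0$, Chen--Stein gives $\PP(W_n=0)\to e^{-\lambda}$, which yields $F_Y$. The case $t=0$ is trivial.
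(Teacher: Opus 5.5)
Your proposal is correct and follows the paper's proof essentially step for step: the same Arratia--Goldstein--Gordon setup with shared-index neighbourhoods, $b_3=0$, $b_1=O(n^{-1})$, and $b_2$ controlled by the shared-index probability. Your conditioning bound $\BE[g(X_1)^2]\le \sup g^2\cdot\PP(g(X_1)>0)=O(s^{2\alpha-1-q/2})$ is just a repackaging of the paper's product-volume estimate in Lemma~\ref{lem-shared-index}, with the uniformity you flag supplied by Lemma~\ref{lemlocal}; it gives the identical exponent $\beta=k+\tfrac{3q}{2}+2$, and your condition $(2+q)/\alpha<1$ is equivalent to the paper's $\beta-\tfrac32\alpha=\tfrac{k-1}{4}>0$.
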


\medskip

The proof combines local geometric arguments with a Poisson
approximation for rare nearly diametral pairs. The local geometry near
the manifold of diameter points is analyzed in 
Subsections~\ref{subseclocal}--\ref{subsecloali},
whereas Subsection~\ref{subsectp} establishes a sharp asymptotic formula for the
two-point tail probability. The proof is completed in 
Subsection~\ref{subsecpoiss} by
means of the Chen--Stein method.

\section{Proof of the main result}\label{secmain}

\subsection{Local coordinates}\label{subseclocal}

Throughout, $0$ denotes the origin of the underlying Euclidean space, whose dimension will always be clear from the context. Moreover, for any integer $m$,  $\mathbb{B}^m := \{x \in \mathbb{R}^m: \|x\| \le 1\}$ denotes the unit ball in $\mathbb{R}^m$. 

If two points of $E$ have distance close to the diameter $2$, then both points must lie close to the manifold $\mathbb{S}^{k-1}\times\{0\}$, and their directions in $\mathbb{S}^{k-1}$ must be nearly antipodal. To describe this local geometry, we introduce coordinates adapted to the diameter manifold.

Put $A=\operatorname{diag}(a_1,\ldots,a_q)$. Every point of $E$, except for the set $\{0\}\times A \mathbb{B}^q$, which has Lebesgue measure zero, can be represented uniquely in the form 
\[
x=x(e,\delta,w) = \bigl(\sqrt{1-\delta}\,e,Aw\bigr),
\]
where
$e\in \mathbb{S}^{k-1}$, $0\le \delta\le 1$, $w\in\mathbb{R}^q$, and
$\|w\|^2\le \delta$.

The variable $\delta$ measures the radial defect from the sphere 
$\mathbb{S}^{k-1}\times\{0\}$, while $w$ describes the displacement in the remaining $q$ directions.

Let $X$ be uniformly distributed on $E$, and write
\[
X=x(\Xi,\Delta,W)
\]
for the corresponding random coordinates. With respect to
$\sigma_{k-1}(\D e)\, \D \delta\,\D w$,
where $\sigma_{k-1}$ denotes surface measure on $\mathbb{S}^{k-1}$, the joint density of $(\Xi,\Delta,W)$ is
\begin{equation}\label{densityf} 
f(e,\delta,w) = \frac{\det A}{2\,\operatorname{Vol}_d(E)}
(1-\delta)^{(k-2)/2},
\end{equation}
on the set
\[
\mathcal D = \left\{(e,\delta,w):
e\in \mathbb{S}^{k-1},\ 0\le \delta\le 1,\ \|w\|^2\le \delta \right\}.
\]
Indeed, the usual polar-coordinate formula in $\mathbb{R}^k$ gives
$\D u = r^{k-1}\,\D r\,\sigma_{k-1}(\D e)$, where $u=re$, and, putting
$r=\sqrt{1-\delta}$,
\[
r^{k-1}\, \D r = -\frac{1}{2} (1-\delta)^{(k-2)/2}\, \D \delta .
\]
The change of variables $z=Aw$ contributes the factor $\det A$. This proves the asserted density.

Note that $f$ does not depend on $e$, reflecting the rotational symmetry
in the first $k$ coordinates, nor on $w$, since the transformation
$z=Aw$ contributes only the constant Jacobian factor $\det A$.
In particular, near the diameter manifold, that is, for $\delta \downarrow 0$, the density is asymptotically constant. This is the analogue of the constant-density property used in the three-dimensional rotational case.

\subsection{Local expansion}

We now investigate the geometry of pairs of points whose distance is close
to the diameter $2$. To this end, let  $x=x(e,\delta,w)$ and $y=x(e',\delta',w')$,
where 
\[
e,e'\in \mathbb{S}^{k-1}, \qquad 0\le\delta,\delta'\le1, \qquad
\|w\|^2\le\delta, \qquad \|w'\|^2\le\delta'.
\]
If $\|x-y\|$ is close to $2$, then $e'$ must be close to $-e$.
We therefore write $e'=-g$, where $g\in \mathbb{S}^{k-1}$ is close to $e$.
Let $\theta\in[0,\pi]$ denote the angle between $e$ and $g$. Thus
$\langle e,g\rangle=\cos\theta$.

The following result gives the local expansion of the distance deficit.

\begin{lemma}[Local expansion]\label{lemlocalex}
Let $x=x(e,\delta,w)$ and $y=x(-g,\delta',w')$, where
$e,g\in \mathbb{S}^{k-1}$ and $\langle e,g\rangle=\cos\theta$.
Then, as $\delta,\delta',\theta,\|w\|,\|w'\|\to 0$, we have
\[
 2-\|x-y\| = \frac{\delta+\delta'}{2} + \frac{\theta^2}{4}
- \frac{1}{4} \|A(w-w')\|^2 + r(\delta,\delta',\theta,w,w'),
\]
where
\[
r(\delta,\delta',\theta,w,w') = o\!\left(\delta+\delta'+\theta^2+\|w\|^2+\|w'\|^2
\right).
\]
The remainder is uniform in $e,g\in \mathbb{S}^{k-1}$.
\end{lemma}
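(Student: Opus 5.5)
Compute $\|x-y\|^2$ exactly, expand it, and take a square root. With $x=(\sqrt{1-\delta}\,e,Aw)$ and $y=(-\sqrt{1-\delta'}\,g,Aw')$, we have $x-y=(\sqrt{1-\delta}\,e+\sqrt{1-\delta'}\,g,\;A(w-w'))$. Using $\|e\|=\|g\|=1$ and $\langle e,g\rangle=\cos\theta$, this gives the exact identity
\[
\|x-y\|^2=(1-\delta)+(1-\delta')+2\sqrt{(1-\delta)(1-\delta')}\cos\theta+\|A(w-w')\|^2 .
\]
Since $e$ and $g$ enter only through $\cos\theta$, uniformity in $e,g\in\mathbb{S}^{k-1}$ is automatic once the remainder is controlled in terms of $\delta,\delta',\theta,w,w'$.

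Next, Taylor-expand each piece around zero. Write $\sqrt{(1-\delta)(1-\delta')}=1-\tfrac{\delta+\delta'}{2}+O((\delta+\delta')^2)$ and $\cos\theta=1-\tfrac{\theta^2}{2}+O(\theta^4)$. The product is
\[
2\sqrt{(1-\delta)(1-\delta')}\cos\theta=2-(\delta+\delta')-\theta^2+O\bigl((\delta+\delta')^2+(\delta+\delta')\theta^2+\theta^4\bigr).
\]
Therefore
\[
\|x-y\|^2=4-2(\delta+\delta')-\theta^2+\|A(w-w')\|^2+\rho,
\]
with $\rho=O\bigl((\delta+\delta'+\theta^2)^2\bigr)$. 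Put $\varepsilon:=2(\delta+\delta')+\theta^2-\|A(w-w')\|^2-\rho$, so that $\|x-y\|=2\sqrt{1-\varepsilon/4}$. Expanding $\sqrt{1-s}=1-s/2+O(s^2)$ gives
\[
2-\|x-y\|=\frac{\varepsilon}{4}+O(\varepsilon^2).
\]
This equals the claimed main term $\tfrac{\delta+\delta'}{2}+\tfrac{\theta^2}{4}-\tfrac14\|A(w-w')\|^2$ up to $\rho/4+O(\varepsilon^2)$.

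The only point needing care is bounding the remainder by $o(\delta+\delta'+\theta^2+\|w\|^2+\|w'\|^2)$. Set $S:=\delta+\delta'+\theta^2+\|w\|^2+\|w'\|^2$, which tends to $0$. Since $\|A\|\le 1$, we have $\|A(w-w')\|^2\le 2(\|w\|^2+\|w'\|^2)\le 2S$, and hence $|\varepsilon|=O(S)$. It follows that $\varepsilon^2=O(S^2)=o(S)$ and $\rho=O(S^2)=o(S)$. The constants in these $O$-terms come from Taylor remainders of $\sqrt{\cdot}$ and $\cos$ on fixed small neighbourhoods of $0$, so they are absolute and independent of $e,g$. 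This gives the asserted uniformity.

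As a side remark, the constraint $\|w\|^2\le\delta$ actually gives $\|w\|^2+\|w'\|^2\le\delta+\delta'$, so one could even replace $S$ by $\delta+\delta'+\theta^2$. This refinement is not needed for the lemma.

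I see no real obstacle; the argument is an elementary second-order expansion, and the only bookkeeping is tracking the cross terms $(\delta+\delta')\theta^2$ and $(\delta+\delta')^2$ into $O(S^2)$.
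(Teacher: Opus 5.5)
Your proposal is correct and follows essentially the same route as the paper: the exact identity $\|x-y\|^2=(1-\delta)+(1-\delta')+2\sqrt{(1-\delta)(1-\delta')}\cos\theta+\|A(w-w')\|^2$, a second-order Taylor expansion, and then passing from $\|x-y\|^2$ to $\|x-y\|$. Your explicit check that $4-\|x-y\|^2=O(S)$, using $\|A\|\le 1$, supplies a step the paper leaves implicit when it turns $o(\varepsilon)$ into $o\bigl(\delta+\delta'+\theta^2+\|w\|^2+\|w'\|^2\bigr)$.
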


\smallskip
\begin{proof}
Write $r_1=\sqrt{1-\delta}, r_2=\sqrt{1-\delta'}$. 
Since $x=(r_1e,Aw)$ and $y=(-r_2g,Aw')$, we obtain
\[
\|x-y\|^2 = \|r_1e+r_2g\|^2 + \|A(w-w')\|^2.
\]
Furthermore,
\[
\|r_1e+r_2g\|^2 = r_1^2+r_2^2+2r_1r_2\langle e,g\rangle.
\]
Since $\langle e,g\rangle=\cos\theta$,
and $\cos\theta = 1- \theta^2/2 +o(\theta^2)$ as $\theta\downarrow 0$,
we obtain $2r_1r_2\langle e,g\rangle = 2r_1r_2-r_1r_2\theta^2+o(\theta^2)$.
Moreover,
\[
r_1 = 1-\frac{\delta}{2}+o(\delta), \qquad
r_2 = 1-\frac{\delta'}{2}+o(\delta').
\]
Hence $r_1^2+r_2^2 = 2-\delta-\delta'$ and $2r_1r_2 = 2-\delta-\delta'+o(\delta+\delta')$. Consequently,
\[
\|x-y\|^2 = 4-2(\delta+\delta')-\theta^2 +\|A(w-w')\|^2
+ o\!\left( \delta+\delta'+\theta^2+\|w\|^2+\|w'\|^2 \right).
\]
\noindent 
Now write $\|x-y\| = 2-\varepsilon$.
Since $\varepsilon\to 0$, we have
$(2-\varepsilon)^2 = 4-4\varepsilon+o(\varepsilon)$.
Therefore
\[
4\varepsilon = 2(\delta+\delta') +\theta^2 -\|A(w-w')\|^2
+ o\!\left( \delta+\delta'+\theta^2+\|w\|^2+\|w'\|^2 \right).
\]
This proves the assertion.
\end{proof}

\subsection{Localization}\label{subsecloali}
The local expansion from the previous subsection is useful only if nearly
diametral pairs necessarily lie close to the diameter manifold
$\mathbb{S}^{k-1}\times\{0\}$. The next lemma shows that this is indeed the case.

\begin{lemma}[Localization]\label{lemlocal}
There exist constants $C>0$ and $\varepsilon_0>0$ such that the following
holds. Let $x=x(e,\delta,w)$, $y=x(e',\delta',w')$, and suppose that
$2-\|x-y\|\le\varepsilon$ for some $0<\varepsilon<\varepsilon_0$. Then
\[
\delta+\delta'\le C\varepsilon,
\]
and
\[
\|w\|^2+\|w'\|^2\le C\varepsilon.
\]
Moreover, $\langle e,e'\rangle+1\le C\varepsilon$.

Equivalently, if $\theta\in[0,\pi]$ is defined by
$\langle e,-e'\rangle=\cos\theta$,
then $\theta^2\le C\varepsilon$.
\end{lemma}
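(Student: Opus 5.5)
We argue globally rather than through Lemma~\ref{lemlocalex}, since that expansion presupposes smallness of all parameters and is therefore circular here. The idea is to bound $\|x-y\|^2$ by $4$ minus explicit nonnegative deficits, each comparable to one of the quantities to be controlled.

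Write $x=(r_1e,Aw)$ and $y=(r_2e',Aw')$ with $r_1=\sqrt{1-\delta}$ and $r_2=\sqrt{1-\delta'}$. Then
\[
\|x-y\|^2=r_1^2+r_2^2-2r_1r_2\langle e,e'\rangle+\|A(w-w')\|^2 .
\]
Put $a:=a_1<1$. Then $\|A(w-w')\|^2\le a^2\|w-w'\|^2\le 2a^2(\|w\|^2+\|w'\|^2)$.

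Next set $s:=1+\langle e,e'\rangle\in[0,2]$, so that $-2r_1r_2\langle e,e'\rangle=2r_1r_2-2r_1r_2s$. This gives
\[
\|x-y\|^2=(r_1+r_2)^2-2r_1r_2 s+\|A(w-w')\|^2 .
\]

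The main step is to exploit $\|w\|^2\le\delta$ and $\|w'\|^2\le\delta'$. Since $r_1\le 1-\delta/2$, we have $(r_1+r_2)^2\le(2-(\delta+\delta')/2)^2\le 4-2(\delta+\delta')+(\delta+\delta')^2/4$. Combined with $\|A(w-w')\|^2\le 2a^2(\delta+\delta')$, this yields
\[
\|x-y\|^2\le 4-2(1-a^2)(\delta+\delta')+\tfrac14(\delta+\delta')^2-2r_1r_2 s .
\]
Because $a<1$, the contraction in the $q$ directions leaves a strictly positive coefficient $2(1-a^2)$. This is precisely where the strict inequality $a_1<1$ enters the argument.

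There is a difficulty when $\delta+\delta'$ is not small, since then the quadratic term could spoil the estimate. The cleanest fix is to use $\tfrac14(\delta+\delta')^2\le\tfrac12(\delta+\delta')$, which holds because $\delta+\delta'\le 2$. However, $\tfrac12$ may exceed $2(1-a^2)$, so a two-step argument is needed:

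1. First show by compactness that $\delta+\delta'$ must be small. The map $(x,y)\mapsto\|x-y\|$ on $E\times E$ attains the value $2$ only on antipodal pairs in $\mathbb{S}^{k-1}\times\{0\}$, where $\delta=\delta'=0$. Since the coordinates $\delta=1-\|u\|^2$ depend continuously on $x$, for every $\eta>0$ there is $\varepsilon_0$ such that $\|x-y\|\ge 2-\varepsilon_0$ implies $\delta+\delta'\le\eta$ and $s\le\eta$.
2. Then choose $\eta\le 4(1-a^2)$, so that $\tfrac14(\delta+\delta')^2\le(1-a^2)(\delta+\delta')$.
3. With $\eta\le 1/2$ we also get $r_1r_2\ge 1/2$.

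These choices give
\[
\|x-y\|^2\le 4-(1-a^2)(\delta+\delta')-s .
\]
On the other hand, $\|x-y\|^2\ge(2-\varepsilon)^2\ge 4-4\varepsilon$. Hence $(1-a^2)(\delta+\delta')+s\le 4\varepsilon$. This yields
\[
\delta+\delta'\le \frac{4\varepsilon}{1-a^2},\qquad s=\langle e,e'\rangle+1\le 4\varepsilon ,
\]
and then $\|w\|^2+\|w'\|^2\le\delta+\delta'$ follows directly from the constraints defining $\mathcal D$.

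For the angle, $\langle e,-e'\rangle=\cos\theta=1-s$, so $1-\cos\theta\le 4\varepsilon$. Since $1-\cos\theta\ge 2\theta^2/\pi^2$ on $[0,\pi]$, we obtain $\theta^2\le 2\pi^2\varepsilon$. Taking $C=\max\{4/(1-a^2),\,2\pi^2\}$ then gives all assertions.

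The only delicate point is the compactness step. The lemma excludes the null set $\{0\}\times A\mathbb{B}^q$, but it is harmless because points there have $\delta=1$, which is far from the diameter set.
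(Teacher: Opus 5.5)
Your proof is correct. Its skeleton matches the paper's:
\begin{itemize}
\item the same decomposition of $\|x-y\|^2$;
\item the same bound $\|A(w-w')\|^2\le 2a_1^2(\delta+\delta')$, which is where $a_1<1$ enters;
\item the same final step $1-\cos\theta\ge 2\theta^2/\pi^2$.
\end{itemize}

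The real difference is how $(r_1+r_2)^2$ is bounded. You use the tangent-line bound $r_i\le 1-\delta_i/2$ on each radius separately. This leaves the error term $\tfrac14(\delta+\delta')^2$, and that is what forces your qualitative compactness step, with a non-explicit $\varepsilon_0$. The paper instead applies AM--GM to the product, $2r_1r_2=2\sqrt{(1-\delta)(1-\delta')}\le 2-\delta-\delta'$. This gives
\[
(r_1+r_2)^2=r_1^2+r_2^2+2r_1r_2\le 4-2(\delta+\delta')
\]
exactly, for all $\delta,\delta'\in[0,1]$. Hence $\|x-y\|^2\le 4-2(1-a_1^2)(\delta+\delta')$ holds globally, and $\delta+\delta'\le 2\varepsilon/(1-a_1^2)$ follows for every $\varepsilon$ with no compactness argument. (The assertion $s\le\eta$ in your compactness step is never used. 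That is just as well, since $e=u/\|u\|$ is not continuous on all of $E$.)

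On the other side, keeping the angular deficit $-2r_1r_2 s$ inside the same upper bound is tidier than the paper's treatment of the angle. The paper handles it afterwards through $O(\varepsilon)$ bookkeeping ($r_1r_2=1+O(\varepsilon)$ and so on), whereas you get $s\le 4\varepsilon$ with an explicit constant in one stroke.

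The two ideas combine into a fully explicit, compactness-free proof:
\begin{itemize}
\item Use AM--GM for $(r_1+r_2)^2$ and retain the term $-2r_1r_2 s$.
\item Drop $s\ge 0$ first to get $\delta+\delta'\le 2\varepsilon/(1-a_1^2)$.
\item For $\varepsilon\le(1-a_1^2)/4$ this gives $r_1r_2\ge\sqrt{1-\delta-\delta'}\ge 1/2$, and hence $s\le 4\varepsilon$.
\end{itemize}
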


\smallskip
\begin{proof}
We first derive bounds for $\delta+\delta'$ and $\|w\|^2+\|w'\|^2$. The estimate for the angular deviation from antipodality will then follow. Write $r_1=\sqrt{1-\delta}$ and  $r_2=\sqrt{1-\delta'}$.

Then $x=(r_1e,Aw)$ and $y=(r_2e',Aw')$. Consequently,
\[
\|x-y\|^2 = r_1^2+r_2^2 - 2r_1r_2\langle e,e'\rangle + \|A(w-w')\|^2 .
\]
Since $\|w\|^2\le \delta$ and $\|w'\|^2\le \delta'$, we have
\[
\|A(w-w')\|^2 \le  a_1^2(\|w\|+\|w'\|)^2 \le 2a_1^2(\delta+\delta').
\]
Moreover, since $\langle e,e'\rangle\ge -1$, it follows that
$\|x-y\|^2 \le (r_1+r_2)^2+2a_1^2(\delta+\delta')$.
Using \(2\sqrt{uv}\le u+v\), with \(u=1-\delta\) and \(v=1-\delta'\), we obtain
\[
2r_1r_2 = 2\sqrt{(1-\delta)(1-\delta')} \le 2-\delta-\delta'.
\]
Therefore,
$(r_1+r_2)^2 = r_1^2+r_2^2+2r_1r_2 \le 4-2(\delta+\delta')$.
Consequently,
\[
\|x-y\|^2 \le 4 -2(1-a_1^2)(\delta+\delta').
\]
Put $\gamma=2(1-a_1^2)>0$. Then $\|x-y\|^2 \le 4-\gamma(\delta+\delta')$.

On the other hand, \(2-\|x-y\|\le\varepsilon\) implies
$\|x-y\|^2 \ge (2-\varepsilon)^2 = 4-4\varepsilon+\varepsilon^2$.
Combining the last two inequalities gives
\[
\delta+\delta' \le \frac{4\varepsilon-\varepsilon^2}{\gamma}
\le C\varepsilon
\]
for all sufficiently small $\varepsilon$. Here and throughout the remainder of the proof,  $C$ denotes a positive constant whose value may change from line to line. 

Since $\|w\|^2\le\delta$ and
$\|w'\|^2\le\delta'$, it follows that $\|w\|^2+\|w'\|^2\le C\varepsilon$.

It remains to estimate the angular deviation from antipodality. Since
$\delta+\delta'=O(\varepsilon)$, we have
$r_1^2+r_2^2=2+O(\varepsilon)$ and $r_1r_2=1+O(\varepsilon)$.
Moreover, $\|A(w-w')\|^2=O(\varepsilon)$.

Using the identity
\[
\|x-y\|^2 = r_1^2+r_2^2 - 2r_1r_2\langle e,e'\rangle + \|A(w-w')\|^2
\]
and the lower bound $\|x-y\|^2\ge4-4\varepsilon$, we obtain
$-2r_1r_2\langle e,e'\rangle \ge 2-C\varepsilon$.
Since $r_1r_2=1+O(\varepsilon)$, this yields
$-\langle e,e'\rangle \ge 1-C\varepsilon$ 
and hence $\langle e,e'\rangle+1\le C\varepsilon$.

Finally, let $\theta\in[0,\pi]$ be defined by
$\langle e,-e'\rangle=\cos\theta$. Then
$\langle e,e'\rangle=-\cos\theta$, and therefore
\[
1-\cos\theta = \langle e,e'\rangle+1 \le C\varepsilon.
\]
Since $1-\cos\theta\ge 2\theta^2/\pi^2$ for $0\le\theta\le\pi$, we conclude that
$\theta^2\le C\varepsilon$.
This proves the assertion.
\end{proof}


\medskip
The localization result shows that, on the event
$\{2-\|X_1-X_2\|\le\varepsilon\}$,
both points must lie within an $O(\varepsilon)$-neighbourhood of the
diameter manifold $\mathbb{S}^{k-1}\times\{0\}$, and their directions on
$\mathbb{S}^{k-1}$ must be nearly antipodal. Consequently, the asymptotic
behaviour of $\PP(2-\|X_1-X_2\|\le\varepsilon)$
is determined entirely by the local geometry described in
Lemma~\ref{lemlocalex}. The next subsection exploits this observation to derive a
sharp asymptotic formula for the two-point tail probability.

\subsection{The two-point tail}\label{subsectp}

For independent random points $X_1,X_2$, uniformly distributed on $E$, put
\[
W_{12}=2-\|X_1-X_2\|.
\]
In this subsection we determine the asymptotic behaviour of
$\PP(W_{12}\le \varepsilon)$ as $\varepsilon \downarrow 0$.
To this end, let $c_E= \det A/(2\operatorname{Vol}_d(E))$ 
denote the constant appearing in the density $f$ of the random coordinates in
\eqref{densityf}. Put
\[
\alpha = q+\frac{k+3}{2} = d+\frac32-\frac{k}{2}. 
\]
Furthermore, define
\[
G(s,s',y,y',\tau) = \frac{s+s'}{2} + \frac{\tau^2}{4} -  \frac14\|A(y-y')\|^2,
\]
where $s,s'\ge 0$, $y,y'\in\mathbb{R}^q$, and $\tau\ge 0$.

Finally, let
\[
I_\mathcal{A} = \sigma_{k-2}(\mathbb{S}^{k-2}) \int_{\mathcal{A}}
{\bf 1}\{G(s,s',y,y',\tau)\le1\} \tau^{k-2}\,\D s\, \D s'\,\D y\,\D y'\,\D\tau,
\]
where
\[
\mathcal{A} = \left\{(s,s',y,y',\tau): s,s'\ge0,\, \|y\|^2\le s,\,
\|y'\|^2\le s',\, \tau\ge0 \right\}.
\]

\medskip

\begin{lemma}[Sharp two-point tail]\label{lemtwop}
As $\varepsilon\downarrow 0$, we have
\[
\PP(W_{12}\le\varepsilon) \sim K_{\mathcal{A}}\varepsilon^\alpha,
\]
where
\begin{equation}\label{defka}
K_{\mathcal{A}} = c_E^2\,\sigma_{k-1}(\mathbb{S}^{k-1})\,I_{\mathcal{A}}.
\end{equation}
\end{lemma}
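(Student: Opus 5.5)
We write $\PP(W_{12}\le\varepsilon)$ as an integral over the product of the two coordinate domains, using the density \eqref{densityf}. Then we restrict to a shrinking neighbourhood via Lemma~\ref{lemlocal}, rescale so that the local expansion of Lemma~\ref{lemlocalex} becomes the limiting functional $G$, and pass to the limit by dominated convergence.

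\textbf{Step 1 (representation and localization).} With $X_1=x(\Xi_1,\Delta_1,W_1)$ and $X_2=x(-\Gamma,\Delta_2,W_2)$, we have
\[
\PP(W_{12}\le\varepsilon)=\int\!\!\int \mathbf 1\{2-\|x-y\|\le\varepsilon\}\,f(e,\delta,w)f(-g,\delta',w')\,\sigma_{k-1}(\D e)\,\D\delta\,\D w\,\sigma_{k-1}(\D g)\,\D\delta'\,\D w'.
\]
By Lemma~\ref{lemlocal}, the integrand vanishes unless $\delta+\delta'\le C\varepsilon$ and $\theta^2\le C\varepsilon$. On that region the density factors satisfy $(1-\delta)^{(k-2)/2}(1-\delta')^{(k-2)/2}=1+O(\varepsilon)$, so both can be replaced by $c_E$ at a cost of a factor $1+o(1)$.

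\textbf{Step 2 (polar coordinates around $e$).} By rotation invariance, the integral over $e$ contributes the factor $\sigma_{k-1}(\mathbb S^{k-1})$, so we may fix $e$. We then parametrize $g$ by geodesic polar coordinates about $e$:
\[
\sigma_{k-1}(\D g)=\sin^{k-2}\theta\,\D\theta\,\sigma_{k-2}(\D v).
\]
The integrand depends on $g$ only through $\theta$, because $\|x-y\|$ involves only $\langle e,g\rangle$. The $v$-integral therefore gives $\sigma_{k-2}(\mathbb S^{k-2})$. (For $k=2$ the convention $\sigma_0(\mathbb S^0)=2$ accounts for the two signs of $\theta$.)

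\textbf{Step 3 (rescaling).} Substitute
\[
\delta=\varepsilon s,\quad \delta'=\varepsilon s',\quad w=\sqrt\varepsilon\,y,\quad w'=\sqrt\varepsilon\,y',\quad \theta=\sqrt\varepsilon\,\tau.
\]
The Jacobian is $\varepsilon^{2}\cdot\varepsilon^{q}\cdot\varepsilon^{1/2}$, and $\sin^{k-2}\theta=\varepsilon^{(k-2)/2}\tau^{k-2}(1+O(\varepsilon))$. The total power of $\varepsilon$ is
\[
2+q+\tfrac12+\tfrac{k-2}{2}=q+\tfrac{k+3}{2}=\alpha.
\]
The constraints $\|w\|^2\le\delta$ and $\|w'\|^2\le\delta'$ become $\|y\|^2\le s$ and $\|y'\|^2\le s'$, so the rescaled domain is $\mathcal A$ intersected with the compact set $\{s+s'\le C,\ \tau^2\le C\}$ supplied by Lemma~\ref{lemlocal}. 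On this set $\|y\|^2+\|y'\|^2\le s+s'\le C$ as well. By Lemma~\ref{lemlocalex}, the event $2-\|x-y\|\le\varepsilon$ becomes
\[
G(s,s',y,y',\tau)+\varepsilon^{-1}r\le 1,\qquad \varepsilon^{-1}r\to0 \text{ uniformly on this compact set}.
\]

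\textbf{Step 4 (limit).} The indicators $\mathbf 1\{G+o(1)\le1\}$ converge to $\mathbf 1\{G\le1\}$ at every point where $G\neq1$. They are dominated by the indicator of the compact set, weighted by $\tau^{k-2}$, which is integrable. Dominated convergence then gives
\[
\PP(W_{12}\le\varepsilon)\sim c_E^2\,\sigma_{k-1}(\mathbb S^{k-1})\,\varepsilon^\alpha I_{\mathcal A}.
\]

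\textbf{Main obstacle.} The hardest step is verifying that the level set $\{G=1\}\cap\mathcal A$ has Lebesgue measure zero, which is needed for the pointwise convergence of the indicators. This follows because $\partial G/\partial s=1/2\neq0$ everywhere, so for fixed $(s',y,y',\tau)$ the equation $G=1$ holds for exactly one value of $s$, and Fubini gives measure zero.

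We must also check that $I_{\mathcal A}$ is finite and positive. On $\mathcal A$,
\[
\tfrac14\|A(y-y')\|^2\le\tfrac{a_1^2}{2}(s+s'),\qquad\text{so}\qquad G\ge \tfrac{1-a_1^2}{2}(s+s')+\tfrac{\tau^2}{4}.
\]
Hence $\{G\le1\}\cap\mathcal A$ is bounded, which gives finiteness; it also contains a neighbourhood of small points of $\mathcal A$ with nonempty interior, which gives positivity.
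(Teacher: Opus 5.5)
Your proposal is correct and follows the paper's proof essentially step for step. Both arguments use the density representation, localization via Lemma~\ref{lemlocal}, geodesic polar coordinates about $e$, the rescaling $\delta=\varepsilon s$, $w=\sqrt{\varepsilon}\,y$, $\theta=\sqrt{\varepsilon}\,\tau$ with total scaling factor $\varepsilon^\alpha$, and dominated convergence justified by $\{G=1\}$ being a null set because $G$ is affine in $s$. Your extra remarks are consistent with the paper: the integrand is exactly independent of $v$, the convention $\sigma_0(\mathbb{S}^0)=2$ handles $k=2$, and the check that $0<I_{\mathcal A}<\infty$ is the one the paper places in its subsequent remark.
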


\smallskip
\begin{proof}
Let $X_1=x(\Xi,\Delta,W)$ and $X_2=x(\Xi',\Delta',W')$ be independent copies in the coordinates introduced in Subsection~\ref{subseclocal}. We
write $\Xi'=-G'$, where $G'\in \mathbb{S}^{k-1}$. Since surface measure on
$\mathbb{S}^{k-1}$ is invariant under the map $e'\mapsto -e'$, the random
variable $G'$ is again integrated with respect to surface measure on
$\mathbb{S}^{k-1}$.

For fixed $e\in \mathbb{S}^{k-1}$, write
$g=\cos\theta\,e+\sin\theta\,v$,
where $0\le\theta\le\pi$ and $v\in\mathbb{S}^{k-2}$ denotes the angular
direction orthogonal to $e$. Thus
$\langle e,g\rangle=\cos\theta$.

The standard polar-coordinate formula on $\mathbb{S}^{k-1}$ yields
\[
\sigma_{k-1}(\D g)
=
(\sin\theta)^{k-2}\,\D\theta\,\sigma_{k-2}(\D v).
\]
The limiting integrand below will not depend on $v$.

By the density formula \eqref{densityf},
\[
\PP(W_{12}\le\varepsilon)
= c_E^2 \int (1-\delta)^{(k-2)/2}(1-\delta')^{(k-2)/2}
{\bf 1}\{2-\|x-y\|\le\varepsilon\} \,\D \mu,
\]
where $x=x(e,\delta,w)$, $y=x(-g,\delta',w')$, and 
$\D \mu  = \sigma_{k-1}(\D e)\, \sigma_{k-1}(\D g)\, \D \delta\,
\D \delta'\,\D w\,\D w'$,
with $0\le\delta,\delta'\le 1$, $\|w\|^2\le\delta$, $\|w'\|^2\le\delta'$.

By Lemma~\ref{lemlocal}, the event $\{2-\|x-y\|\le\varepsilon\}$
implies, for sufficiently small $\varepsilon$,
\[
\delta+\delta'=O(\varepsilon),
\qquad
\|w\|^2+\|w'\|^2=O(\varepsilon),
\qquad
\theta^2=O(\varepsilon).
\]
We may therefore introduce the scaled variables
\[
\delta=\varepsilon s,\qquad \delta'=\varepsilon s', \qquad
w=\sqrt{\varepsilon}\,y,
\qquad
w'=\sqrt{\varepsilon}\,y',
\qquad \text{and} \qquad 
\theta=\sqrt{\varepsilon}\,\tau.
\]
The constraints $\|w\|^2\le\delta$ and $\|w'\|^2\le\delta'$ become
$\|y\|^2\le s$ and $\|y'\|^2\le s'$, and the Jacobian contribution from these changes of variables is $\varepsilon^2\varepsilon^q\varepsilon^{1/2} = \varepsilon^{q+5/2}$.
In addition,
\[
(\sin\theta)^{k-2} = (\sin(\sqrt{\varepsilon}\tau))^{k-2} \sim
\varepsilon^{(k-2)/2}\tau^{k-2},
\]
uniformly for $\tau$ in bounded intervals. Hence the full scaling factor is
\[
\varepsilon^{q+5/2}\varepsilon^{(k-2)/2} = \varepsilon^{q+(k+3)/2}
= \varepsilon^\alpha.
\]

By Lemma~\ref{lemlocalex}, after this change of variables,
\[
2-\|x-y\| = \varepsilon G(s,s',y,y',\tau) + r_\varepsilon,
\]
where
\[
r_\varepsilon = o\!\left( \delta+\delta' +\theta^2 +\|w\|^2 +\|w'\|^2 \right).
\]
On bounded subsets of the scaled parameter space we have
\[
\delta+\delta' +\theta^2 +\|w\|^2 +\|w'\|^2 = O(\varepsilon).
\]
Hence $r_\varepsilon=o(\varepsilon)$, uniformly on such bounded subsets. Consequently,
\[
\frac{2-\|x-y\|}{\varepsilon} \to G(s,s',y,y',\tau)
\]
uniformly on bounded subsets of the scaled parameter space.
Moreover,
\[
(1-\varepsilon s)^{(k-2)/2} (1-\varepsilon s')^{(k-2)/2} \to 1.
\]
Since $G$ is affine in $s$ and $s'$, the level set
$G=1$ has Lebesgue measure zero.

By Lemma~\ref{lemlocal}, on the event $\{2-\|x-y\|\le \varepsilon\}$,
we have
\[
\delta+\delta'=O(\varepsilon),
\qquad
\|w\|^2+\|w'\|^2=O(\varepsilon),
\qquad
\theta^2=O(\varepsilon).
\]
Hence, after the above change of variables,
\[
s+s'=O(1), \qquad \|y\|^2+\|y'\|^2=O(1), \qquad \tau^2=O(1).
\]
Consequently, the transformed indicators vanish outside a bounded subset
of the scaled parameter space that does not depend on sufficiently small
\(\varepsilon\).

Therefore the dominated convergence theorem applies. The
possible discontinuity of the indicator at
$G(s,s',y,y',\tau) = 1$ does not affect the limit, since this level set has Lebesgue measure zero. Consequently,
\[
\varepsilon^{-\alpha} \PP(W_{12}\le\varepsilon)
\to 
c_E^2\sigma_{k-1}(\mathbb{S}^{k-1})\sigma_{k-2}(\mathbb{S}^{k-2})
\int_{\mathcal{A}}
{\bf 1}\{G(s,s',y,y',\tau)\le1\}
\tau^{k-2}\,\D s\,\D s'\,\D y\,\D y'\,\D \tau.
\]
By the definition of $I_{\mathcal{A}}$, the right-hand side is
\[
c_E^2\sigma_{k-1}(\mathbb{S}^{k-1})I_{\mathcal{A}}
=
K_\mathcal{A}.
\]
This proves the assertion.
\end{proof}

\medskip
\begin{remark}
The constant $K_{\mathcal{A}}$ figuring in \eqref{defka} is finite and strictly positive. Indeed, it is enough to show that $0<I_{\mathcal{A}}<\infty$. Positivity follows since the set
\[
\left\{ (s,s',y,y',\tau): 0<s<\frac14,\quad 0<s'<\frac14,\quad
\|y\|^2<\frac{s}{4},\quad \|y'\|^2<\frac{s'}{4},\quad 0<\tau<1 \right\}
\]
has positive Lebesgue measure and is contained in $\mathcal A$. Moreover,
on this set,
\[
G(s,s',y,y',\tau) \le \frac{s+s'}{2}+\frac{\tau^2}{4} < \frac14+\frac14 < 1.
\]
Thus $I_{\mathcal{A}}>0$. To prove finiteness, note first that on $\mathcal A$ we have
\[
\|y-y'\|^2 \le 2\|y\|^2+2\|y'\|^2 \le 2s+2s'.
\]
Hence
\[
\|A(y-y')\|^2 \le a_1^2\|y-y'\|^2 \le 2a_1^2(s+s').
\]
Therefore
\[
G(s,s',y,y',\tau) \ge \frac{s+s'}{2} +\frac{\tau^2}{4} -\frac{a_1^2}{2}(s+s')
= \frac{1-a_1^2}{2}(s+s') +\frac{\tau^2}{4}.
\]
Consequently, $G(s,s',y,y',\tau)\le 1$ implies
\[
\frac{1-a_1^2}{2}(s+s') +\frac{\tau^2}{4} \le 1.
\]
Hence $s$, $s'$, $y$, $y'$ and $\tau$ are all confined to a bounded set.
Since the integrand $\tau^{k-2}$ is continuous for $k\ge 2$,
it follows that $I_{\mathcal{A}}<\infty$.
\end{remark}


\subsection{Poisson approximation}\label{subsecpoiss}

We now turn the two-point tail estimate into a limit theorem for the
maximum interpoint distance. For $1\le i<j\le n$, put $W_{ij}=2-\|X_i-X_j\|$.
We first need a bound for two rare events that share one common index.

\begin{lemma}\label{lem-shared-index}
As $\varepsilon\downarrow 0$,
\[
\PP(W_{12}\le\varepsilon,\ W_{13}\le\varepsilon) =
O(\varepsilon^\beta),
\]
where
\[
\beta  = k+\frac{3q}{2}+2.
\]
\end{lemma}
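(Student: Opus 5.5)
The plan is to bound the probability by the measure of a product-type neighbourhood supplied by Lemma~\ref{lemlocal}. Then we integrate, using the conditional independence of $X_2$ and $X_3$ given $X_1$. Write $X_i=x(\Xi_i,\Delta_i,W_i)$ for $i=1,2,3$ in the coordinates of Subsection~\ref{subseclocal}. Lemma~\ref{lemlocal}, applied separately to the pairs $(X_1,X_2)$ and $(X_1,X_3)$, shows that for $0<\varepsilon<\varepsilon_0$ the event $\{W_{12}\le\varepsilon,\,W_{13}\le\varepsilon\}$ is contained in
\[
\bigl\{\Delta_1\le C\varepsilon\bigr\}\cap\bigcap_{j=2,3}\bigl\{\Delta_j\le C\varepsilon,\ \langle \Xi_1,\Xi_j\rangle+1\le C\varepsilon\bigr\}.
\]
Automatically $\|W_i\|^2\le\Delta_i$ on $\mathcal D$. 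Given $X_1$, the two events indexed by $j=2,3$ are conditionally independent, since they depend only on $X_j$ and $X_1$. Hence
\[
\PP(W_{12}\le\varepsilon,\ W_{13}\le\varepsilon)\le \BE\Bigl[{\bf 1}\{\Delta_1\le C\varepsilon\}\,p_\varepsilon(\Xi_1)^2\Bigr],
\]
where $p_\varepsilon(e)=\PP(\Delta_2\le C\varepsilon,\ \langle e,\Xi_2\rangle+1\le C\varepsilon)$.

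The next step is to estimate both factors using the bounded density \eqref{densityf}; note $f\le c_E$. For the first factor, integrating $\delta$ over $[0,C\varepsilon]$ and $w$ over the ball of radius $\sqrt{\delta}$ gives
\[
\PP(\Delta_1\le C\varepsilon)\le c_E\,\sigma_{k-1}(\mathbb S^{k-1})\int_0^{C\varepsilon}\kappa_q\,\delta^{q/2}\,\D\delta=O(\varepsilon^{1+q/2}),
\]
with $\kappa_q$ the volume of $\mathbb B^q$. For $p_\varepsilon(e)$, the angular condition $1-\cos\theta\le C\varepsilon$ forces $\theta\le C\sqrt{\varepsilon}$, where $\theta$ is the angle between $e$ and $-\Xi_2$. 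The cap of such directions has $\sigma_{k-1}$-measure
\[
\sigma_{k-2}(\mathbb S^{k-2})\int_0^{C\sqrt\varepsilon}(\sin\theta)^{k-2}\,\D\theta=O(\varepsilon^{(k-1)/2}).
\]
The $(\delta,w)$-part again contributes $O(\varepsilon^{1+q/2})$. So $p_\varepsilon(e)=O(\varepsilon^{(k+1)/2+q/2})$, uniformly in $e$ by rotational invariance.

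Multiplying the three bounds gives the exponent
\[
\Bigl(1+\tfrac q2\Bigr)+2\Bigl(\tfrac{k+1}{2}+\tfrac q2\Bigr)=k+\tfrac{3q}{2}+2=\beta,
\]
which is the claim. The only point needing care is the justification of the containment. Lemma~\ref{lemlocal} is stated for pairs with $e'$ arbitrary and gives $\langle e,e'\rangle+1\le C\varepsilon$ without presupposing $e'$ near $-e$, so it applies verbatim to both pairs with a common constant $C$. I expect no genuine obstacle: the main step is simply recognizing that, conditionally on $X_1$, the two rare events factorize and each costs the full "second point" factor $\varepsilon^{(k+1+q)/2}$. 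The shared point $X_1$ is paid for only once, at cost $\varepsilon^{1+q/2}$.
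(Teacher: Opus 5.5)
Your proposal is correct and follows essentially the same route as the paper. Lemma~\ref{lemlocal}, applied to both pairs, confines all three points to an $O(\varepsilon)$-neighbourhood of the diameter manifold with $\Xi_2,\Xi_3$ in caps of angular radius $O(\sqrt{\varepsilon})$ around $-\Xi_1$, and the bounded density then gives $\varepsilon^{3}\varepsilon^{3q/2}\varepsilon^{k-1}=\varepsilon^\beta$; your conditioning on $X_1$ is just a tidier organization of the same product-volume (Fubini) computation that the paper performs on $B_\varepsilon$.
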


\begin{proof}
Let $X_i=x(\Xi_i,\Delta_i,W_i)$,  $i=1,2,3$,
where $(\Xi_1,\Delta_1,W_1)$, $(\Xi_2,\Delta_2,W_2)$ and 
$(\Xi_3,\Delta_3,W_3)$ are independent copies of the random vector
$(\Xi,\Delta,W)$ introduced in Subsection~\ref{subseclocal}.

On the event $\{W_{12}\le\varepsilon,\;W_{13}\le\varepsilon\}$,
Lemma~\ref{lemlocal}, applied to the pairs $(X_1,X_2)$ and $(X_1,X_3)$ yields 
$\Delta_1+\Delta_2\le C\varepsilon$, $\Delta_1+\Delta_3\le C\varepsilon$.
Hence $\Delta_i\le C\varepsilon$ and $\|W_i\|^2\le C\varepsilon$ for $i=1,2,3$.

Moreover, the directions $\Xi_2$ and $\Xi_3$ must both lie within angular
distance $O(\sqrt{\varepsilon})$ of $-\Xi_1$. Equivalently, each of
$\Xi_2,\Xi_3$ lies in a spherical cap on $\mathbb{S}^{k-1}$ of angular radius
$C\sqrt{\varepsilon}$.

Let $(e_i,\delta_i,w_i)$, $i=1,2,3$, be realizations of the random coordinate vectors
$(\Xi_i,\Delta_i,W_i)$, $i=1,2,3$.

Let $B_\varepsilon$ be the set of all realizations
$(e_i,\delta_i,w_i)$, $i=1,2,3$, satisfying
\[
0\le \delta_i\le C\varepsilon,\qquad
\|w_i\|^2\le C\varepsilon,\qquad i=1,2,3,
\]
and such that both $e_2$ and $e_3$ lie in the spherical cap on
$\mathbb S^{k-1}$ centred at $-e_1$ with angular radius
$C\sqrt{\varepsilon}$. By the preceding localization argument,
$\{W_{12}\le\varepsilon,\ W_{13}\le\varepsilon\}$
is contained in the event that the corresponding coordinate vectors
belong to $B_\varepsilon$.

Since the density $f$ in \eqref{densityf} is bounded, and since the three random
coordinate vectors are independent, the probability under consideration
is bounded by a constant multiple of the product-coordinate volume of
$B_\varepsilon$. It therefore remains to estimate the product-coordinate volume of
$B_\varepsilon$.

The variable $e_1$ contributes
only the finite surface measure of $\mathbb{S}^{k-1}$. The variables
$\delta_1$, $\delta_2$ and $\delta_3$ contribute a factor $\varepsilon^3$. The
variables $w_1,w_2,w_3 \in\mathbb{R}^q$ contribute
$\varepsilon^{3q/2}$. Finally, the two spherical caps for $e_2$ and $e_3$ each have surface measure of order
\[
(\sqrt{\varepsilon})^{k-1} = \varepsilon^{(k-1)/2}.
\]
Consequently,
\[
\PP(W_{12}\le\varepsilon,\ W_{13}\le\varepsilon) = 
O\!\left(\varepsilon^3 \varepsilon^{3q/2} \varepsilon^{k-1} \right)
= O(\varepsilon^\beta),
\]
where
\[
\beta = 3+\frac{3q}{2}+k-1 = k+\frac{3q}{2}+2.
\]
This proves the assertion.
\end{proof}

\medskip

\noindent For $t\ge 0$, put $\varepsilon_n=t n^{-2/\alpha}$, and define
$I_{ij}=\mathbf{1}\{W_{ij}\le\varepsilon_n\}$ for $1\le i<j\le n$. Moreover,
let
\[
N_n(t) = \sum_{1\le i<j\le n} I_{ij}
\]
be the number of pairs whose distance deficit is at most $\varepsilon_n$. Let 
$\operatorname{Po}\left(\lambda \right)$ denote the Poisson distribution with expectation $\lambda$.

\begin{lemma}\label{lem-poisson}
For each $t\ge 0$,
\[
N_n(t) \vertk  \operatorname{Po}\left(\frac{K_{\mathcal{A}}}{2}t^\alpha\right).
\]
\end{lemma}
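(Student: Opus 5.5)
We will use the Chen--Stein method for sums of dissociated indicators, in the form of Arratia, Goldstein and Gordon (or Barbour--Holst--Janson). The case $t=0$ is trivial, since $N_n(0)=0$ almost surely because $\PP(W_{12}=0)=0$. So fix $t>0$. For each pair $\{i,j\}$ we take as dependency neighbourhood the set of pairs sharing at least one index with $\{i,j\}$. Indicators indexed by disjoint pairs are independent, so the third Chen--Stein term $b_3$ vanishes. This gives
\[
d_{TV}\bigl(N_n(t),\operatorname{Po}(\lambda_n)\bigr)\le 2(b_1+b_2),\qquad \lambda_n=\BE N_n(t)=\binom n2 \PP(W_{12}\le\varepsilon_n).
\]
Here $b_1=\sum_{\{i,j\}}\sum_{\{k,l\}\cap\{i,j\}\neq\emptyset}p_{ij}p_{kl}$, and $b_2$ is the analogous sum of $\BE I_{ij}I_{kl}$ over distinct neighbouring pairs.

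\textbf{Expectation.} Since $\varepsilon_n=tn^{-2/\alpha}\to0$, Lemma~\ref{lemtwop} gives $\PP(W_{12}\le\varepsilon_n)\sim K_{\mathcal A}t^\alpha n^{-2}$. Hence
\[
\lambda_n\sim \frac{n^2}{2}K_{\mathcal A}t^\alpha n^{-2}\to \frac{K_{\mathcal A}}2 t^\alpha.
\]
Since $\operatorname{Po}(\lambda_n)\to\operatorname{Po}(\lambda)$ whenever $\lambda_n\to\lambda$, it then suffices to show $b_1,b_2\to0$.

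\textbf{Bounding $b_1$.} The number of ordered pairs of neighbouring index pairs is $O(n^3)$, and each product is $O(n^{-4})$. Therefore $b_1=O(n^{-1})\to0$.

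\textbf{Bounding $b_2$.} The distinct neighbouring pairs share exactly one index. By exchangeability, $b_2=O\bigl(n^3\,\PP(W_{12}\le\varepsilon_n,W_{13}\le\varepsilon_n)\bigr)$. Lemma~\ref{lem-shared-index} then gives $b_2=O(n^3\varepsilon_n^\beta)=O(n^{3-2\beta/\alpha})$. So we need $2\beta>3\alpha$. Compute
\[
2\beta=2k+3q+4,\qquad 3\alpha=3q+\tfrac{3k+9}{2}.
\]
Then $2\beta-3\alpha=\tfrac{k-1}{2}>0$ because $k\ge2$. Thus $b_2=O\bigl(n^{-(k-1)/\alpha}\bigr)\to0$.

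This exponent comparison is the main point of the argument. The margin is only $(k-1)/2$, so the bound from Lemma~\ref{lem-shared-index} must be used exactly as stated; a cruder bound, for instance one ignoring the cap restriction on $e_2,e_3$, would fail. With $d_{TV}\to0$ and $\lambda_n\to K_{\mathcal A}t^\alpha/2$, the asserted convergence in distribution follows.
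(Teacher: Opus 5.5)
Your proposal is correct and follows the paper's proof essentially step for step: Theorem~1 of Arratia--Goldstein--Gordon with neighbourhoods of pairs sharing an index, $b_3=0$, $b_1=O(n^{-1})$, and $b_2=O(n^{3-2\beta/\alpha})\to 0$ via Lemma~\ref{lem-shared-index} and $2\beta-3\alpha=(k-1)/2>0$, together with $\BE N_n(t)\to K_{\mathcal A}t^\alpha/2$. Two minor remarks: your explicit rate should read $b_2=O\bigl(n^{-(k-1)/(2\alpha)}\bigr)$ rather than $O\bigl(n^{-(k-1)/\alpha}\bigr)$, which is harmless since it still tends to zero; and your separate treatment of $t=0$ is a tidy addition that the paper leaves implicit.
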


\smallskip

\begin{proof}
By Lemma~\ref{lemtwop}, 
\[
\PP(W_{12}\le\varepsilon_n)
\sim
K_{\mathcal{A}}\varepsilon_n^\alpha =
K_{\mathcal{A}} t^\alpha n^{-2} \quad \text{as} \ n \to \infty.
\]
Hence
\[
\BE N_n(t)
=
{n\choose2} \PP(W_{12}\le\varepsilon_n) \to 
\frac{K_{\mathcal{A}}}{2}t^\alpha.
\]

We now apply Theorem~1 of Arratia, Goldstein and Gordon~\cite{agg}. To this end, 
let
\[
\mathcal I_n=\{(i,j):1\le i<j\le n\}.
\]
For $\alpha_0=(i,j)\in\mathcal I_n$, define
$B_{\alpha_0} = \{(r,s)\in\mathcal I_n:\{r,s\}\cap\{i,j\}\ne\emptyset\}$.
Then $I_{\alpha_0}$ is independent of the family
$\{I_{\beta_0}:\beta_0\notin B_{\alpha_0}\}$.

Consequently, in the notation of \cite{agg}, the
term $b_3$ is equal to zero.

Since each pair $(i,j)$ has at most $2(n-2)+1=O(n)$ neighbours in the
dependency graph, the total number of summands in the quantities $b_1$
and $b_2$ in~\cite{agg} is of order $n^3$. Therefore
\[
b_1 = \sum_{\alpha_0\in\mathcal I_n} \sum_{\beta_0\in B_{\alpha_0}}
\PP(I_{\alpha_0}=1)\PP(I_{\beta_0}=1) 
= O\!\left(n^3\PP(W_{12}\le\varepsilon_n)^2 \right).
\]
By Lemma~\ref{lemtwop}, 
\[
n^3 \PP(W_{12}\le\varepsilon_n)^2 = O(n^3\varepsilon_n^{2\alpha})
= O(n^3n^{-4}) = O(n^{-1}) \to 0.
\]
Thus $b_1\to 0$. Similarly,
\[
b_2 = \sum_{\alpha_0 \in \mathcal{I}_n}
\sum_{\substack{\beta_0\in B_{\alpha_0}\\ \beta_0\ne\alpha_0}}
\PP(I_{\alpha_0}=1,I_{\beta_0}=1).
\]
Every nonzero summand in \(b_2\) has the same form as
$\PP(W_{12}\le\varepsilon_n,\ W_{13}\le\varepsilon_n)$,
up to a relabelling of indices. Hence, by Lemma~\ref{lem-shared-index},
\[
b_2 = O\!\left( n^3 \PP(W_{12}\le \varepsilon_n,\ W_{13}\le\varepsilon_n)
\right)
= O(n^3\varepsilon_n^\beta).
\]
Since $\varepsilon_n=tn^{-2/\alpha}$, this gives
\[
b_2 = O\!\left(n^{3-2\beta/\alpha} \right).
\]
Moreover,
\[
\beta-\frac32\alpha = \left(k+\frac{3q}{2}+2\right)
- \frac32\left(q+\frac{k+3}{2}\right) = \frac{k-1}{4}.
\]
Since $k\ge 2$, we have $\beta > 3\alpha/2$ 
and hence
\[
3-\frac{2\beta}{\alpha}<0.
\]
Therefore $b_2\to 0$.
Theorem~1 of~\cite{agg} now yields
\[
\lim_{n\to \infty} d_{\rm TV}\left( \mathcal L(N_n(t)), \operatorname{Po}(\BE N_n(t)) \right)
 = 0.
\]
Together with $\BE N_n(t)\to K_{\mathcal{A}}t^\alpha /2$,
this proves the assertion. 
\end{proof}

\bigskip
\noindent 
We can now complete the proof of the main theorem.

\begin{proof}[Proof of Theorem~\ref{thm:main}]
Let $t\ge 0$, and put $\varepsilon_n=t n^{-2/\alpha}$.
By the definition of $N_n(t)$,
\[
\{N_n(t)=0\}
=
\{W_{ij}>\varepsilon_n \text{ for all } 1\le i<j\le n\}.
\]
Since $W_{ij}=2-\|X_i-X_j\|$, this event is the same as
$\{2-M_n>\varepsilon_n\}$.

Equivalently,
\[
\{N_n(t)=0\}
=
\{n^{2/\alpha}(2-M_n)>t\}.
\]

By Lemma~\ref{lem-poisson},
\[
N_n(t)
\stackrel{\mathcal D}{\longrightarrow}
\operatorname{Po}\!\left(\frac{K_{\mathcal{A}}}{2}t^\alpha\right).
\]
Therefore
\[
\PP\!\left(n^{2/\alpha}(2-M_n)>t\right)
= \PP(N_n(t)=0)
\to
\exp\!\left(-\frac{K_{\mathcal{A}}}{2}t^\alpha\right).
\]
Hence
\[
\mathbb{P}\!\left(n^{2/\alpha}(2-M_n)\le t\right)
\to 1-\exp\!\left(-\frac{K_{\mathcal{A}}}{2}t^\alpha\right), \qquad t\ge 0.
\]
This proves the assertion.
\end{proof}

\section{Remarks and open problems}

\begin{remark}
  
Theorem~\ref{thm:main} extends the recently obtained limit law for the
largest interpoint distance in a rotational ellipsoid in $\mathbb R^3$
studied in \cite{he26}. Indeed, for $d=3$, $k=2$, and $q=1$, we obtain
\[
\alpha = q+\frac{k+3}{2} = \frac72.
\]
Hence the normalization exponent is $2/\alpha=4/7$, which recovers the
$4/7$-limit law established in \cite{he26}.
\end{remark}

\medskip
 \begin{remark}
Although Theorem~\ref{thm:main} is proved only under the assumption
$k\ge 2$, the exponent appearing in Theorem~\ref{thm:main} is consistent with the case $k=1$ treated by
Schrempp \cite{sc16}. Indeed, if one formally puts $k=1$, then
$q=d-1$, and hence
\[
\alpha = q+\frac{k+3}{2} = (d-1)+2 = d+1.
\]
Consequently, the normalization exponent becomes
\[
\frac{2}{\alpha} = \frac{2}{d+1},
\]
which is precisely the exponent obtained in \cite{sc16} for ellipsoids
with a unique major axis. It is worth noting that Schrempp's proof is
based on a substantially different approach, exploiting the local geometry
near two isolated diameter points. By contrast, the present paper deals
with the fundamentally different situation in which the diameter is
attained on a manifold of positive dimension and combines a local analysis
near this manifold with a Chen--Stein Poisson approximation.
\end{remark}

\medskip

A first natural problem is to replace the uniform distribution on the
ellipsoid $E$ by a more general absolutely continuous distribution.
One may conjecture that the normalization exponent
$\alpha=q+(k+3)/2$  remains unchanged whenever the density is continuous and strictly
positive in a neighbourhood of the diameter manifold
$\mathbb{S}^{k-1}\times\{0\}$.
In such a setting, however, the constant appearing in the limit law
should depend not only on the local geometry of the support but also on
the values of the density along the diameter manifold.

A second problem is to extend the analysis to bounded regions that are
more general than the ellipsoids considered in the present paper. The
explicit geometry of the ellipsoid makes it possible to introduce
suitable local coordinates and to derive a tractable expansion for the
distance deficit. It would be interesting to identify geometric
conditions under which analogous arguments remain valid for more general
convex bodies.

More generally, one may ask for weak limit laws when the diameter is
attained along a smooth manifold of positive dimension. The results of
the present paper suggest that the normalization exponent should depend
on both the dimension of the diameter manifold and the local behaviour
of the boundary in directions transversal to that manifold. Determining
the precise form of this dependence appears to be an open problem.

It would  be desirable to develop a general theory for compact
supports whose diameter is attained on a manifold of extremal pairs. The
present paper and \cite{he26} indicate that the asymptotic behaviour of
the largest interpoint distance is governed by a delicate interplay
between local geometry and the amount of probability mass concentrated
near the diameter set.

Finally, it  would also be interesting to investigate possible applications of the present results in directional statistics and in the statistical analysis of high-dimensional point clouds arising, for example, in astronomy.

\end{document}